\newtheorem{theorem}{Theorem}
\newtheorem{lemma}{Lemma}
\newcommand{\ZZ}{\mathbb{Z}}                        
\newcommand{\RR}{\mathbb{R}}                       
\newcommand{\CC}{\mathbb{C}}                       
\newcommand{\LR}[3]{c_{#1, #2}^{#3}}             
\newcommand{\GS}{S_2 \times (S_3 \wr S_2)} 
\newcommand{\Cone}{\mathcal{C}}                    
\newcommand{\rays}{\mathcal{R}}                    
\renewcommand{\S}{S}                    
\renewcommand{\|}{\;|\;}
\title[Symmetries of the Littlewood--Richardson coefficients]{The 144 symmetries of the Littlewood-Richardson coefficients of $SL_3$}
\date{\today}
\author{Emmanuel Briand}
\address{Emmanuel Briand, Departamento Matemática Aplicada I, Universidad de Sevilla}
\email{ebriand@us.es}
 \author{Mercedes Rosas}
\address{Mercedes Rosas, Departamento de \'Algebra, Universidad de Sevilla}
\email{mrosas@us.es}
\thanks{Both authors are partially supported by MTM2016-75024-P and FEDER, and Junta de Andalucia under grants P12-FQM-2696 and FQM-333.}
\begin{document}

\begin{abstract}
We compute with \emph{SageMath} the group of all linear symmetries for the Littlewood-Richardson associated to the representations of $SL_3$. We find that there are 144 symmetries, more than the 12 symmetries known for the Littlewood-Richardson coefficients in general.
\end{abstract}

\maketitle


\section{Introduction}

The \emph{Littlewood--Richardson coefficients} $\LR{\lambda}{\mu}{\nu}$ are among  the most studied  constants in geometry and representation theory. 
In geometry, they are the structure constants for the multiplication in the cohomology ring of the Grassmannian, in the basis of the Schubert cycles \cite{Fulton}. 
They can also be interpreted as cardinalities of the intersection of some triples of Schubert varieties \cite{BilleyVakil}. 
In the representation theory of the general linear group, they are the multiplicities in the tensor product of irreducible representations, and also the dimensions of the subspace of invariants in the triple tensor products of irreducible representations \cite{BerensteinZelevinsky}. 
In the representation theory of the symmetric groups, they are the multiplicities in the restrictions of the irreducible representations of a symmetric group $\S_{m+n}$ to its Young subgroup $\S_m \times \S_n$. 
In the theory of symmetric functions, they are the structure constants for the ordinary multiplication in the basis of Schur functions.  
They receive also a number a combinatorial interpretations: they  are known to count Littlewood-Richardson tableaux, hives, BZ triangles, \ldots

Together, these different interpretations of the Littlewood-Richardson coefficients make clear some symmetries they afford. 
For instance, the descriptions as structural constants for commutative  products  (of Schur functions, in the ring of symmetric functions; of Schubert cycles, in the cohomology ring of the Grassmannian) make clear the invariance 
$\LR{\lambda}{\mu}{\nu} = \LR{\mu}{\lambda}{\nu}$, under exchanging the two partitions $\lambda$ and $\mu$. The descriptions as cardinalities of intersection of triples of Schubert varieties \cite{ThomasYong}, or as dimensions of subspaces of invariants in triple tensor products \cite{BerensteinZelevinsky} reveals the existence of a $\S_3$--symmetry of the Littlewood--Richardson coefficients (that comprises the previous symmetry):
\begin{equation}\label{S3}
\LR{\lambda}{\mu}{\nu}
= \LR{\mu}{\lambda}{\nu}
= \LR{\mu}{\nu^{\square}}{\lambda^{\square}}
= \LR{\nu^{\square}}{\mu}{\lambda^{\square}}
= \LR{\nu^{\square}}{\lambda}{\mu^{\square}}
= \LR{\lambda}{\nu^{\square}}{\mu^{\square}}
\end{equation}
where $\lambda^{\square}$, $\mu^{\square}$ and $\nu^{\square}$ are obtained from the Young diagrams of ${\lambda}$, ${\mu}$ and ${\nu}$ by ``taking complements'' in suitable rectangles (see Figure \ref{figure:LR}).
\begin{figure}[ht]
\begin{tikzpicture}[scale=.3]
\tikzstyle{every node}=[inner sep=3pt]; 
\draw[thick] (1,.5)--(1,4.5)--(5,4.5)--(5,3.5)--(6.5,3.5)--(6.5,2.5)--(8,2.5)--(8,.5)--cycle;
\draw (3.5,2.5) node { ${\lambda} $};
\draw[dashed] (1,.5)--(1,6)--(8,6)--(8,2.5);
\draw (6.7,4.5) node {$\lambda^{\square}$};
\end{tikzpicture}
\caption{New Young diagram obtained as complement of a Young diagram in a rectangle. When considering the Littlewood--Richardson coefficients associated to $SL_N$. with $\lambda$ and $\mu$ of length at most $N-1$ and $\nu$ of length at most $N$, one takes complements in rectangles of height $N$ and: for \eqref{S3}, of length $\nu_1$; for \eqref{3rectangles}, of length respectively $\lambda_1$, $\mu_1$, $\lambda_1+\mu_1$ for $\lambda^{\square}$, $\mu^{\square}$ and $\nu^{\square}$.} 
\label{figure:LR}
\end{figure}
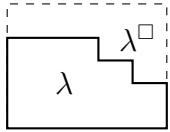

Duality provides an additional symmetry (see \cite{BriandOrellanaRosas:symmetries}):
\begin{equation}\label{3rectangles}
\LR{\lambda}{\mu}{\nu} = \LR{\lambda^{\square}}{\mu^{\square}}{\nu^{\square}}.
\end{equation}

Together, these ``known symmetries'' generate a group of $12$ symmetries, isomorphic to $\S_2 \times \S_3$. Note that it is not straightforward to understand these symmetries all together from the combinatorial descriptions of the Littlewood-Richardson coefficients (see \cite{PakVallejo:cones,  ThomasYong, TeradaKingAzenhas}).

\smallskip
Is it possible that the Littlewood--Richardson coefficients afford additional, unknown symmetries?

\smallskip

To settle this question, we adopt an experimental approach. We consider the family of Littlewood-Richardson coefficients  indexed by partitions $\lambda$, $\mu$ and $\nu$ with restricted length. The combinatorial object they count can be interpreted as lattice points in a (rational convex polyhedral) cone. As a consequence (see \cite{Rassart}), $\LR{\lambda}{\mu}{\nu}$ is a piecewise quasipolynomial function (actually polynomial, by an additional argument given again in \cite{Rassart}) of the parts 
of the partitions $\lambda$, $\mu$ and $\nu$. The domains of polynomiality are the maximal cones (``chamber'') of a fan (a complex of rational polyhedral convex cones, called the ``Chamber Complex''). Explicit knowledge of such a description provides data making possible an exhaustive search of all linear symmetries. 

The Chamber Complex and the polynomial formulas for the Littlewood--Richardson coefficients $\LR{\lambda}{\mu}{\nu}$ related to the representations of $GL_3(\CC)$ (i.e. where all three partitions have length at most $3$) were explicitly given in \cite{Rassart}. 
We exploit and analyze these data with \emph{SageMath} \cite{sagemath}, and obtain: 
\begin{theorem}\label{thm:main}
The group of linear symmetries of the Littlewood--Richardson coefficients $\LR{\lambda}{\mu}{\nu}$ associated to $SL_3$, has order 144. It is isomorphic to $\GS$, and acts transitively on the chambers of the Chamber Complex. It is generated by the well--known group of 12 symmetries, and the additional symmetry:
\[
\LR{(\lambda_1, \lambda_2)}{(\mu_1, \mu_2)}{(\nu_1, \nu_2, \nu_3)} 
=
\LR{(\lambda_1+\mu_1-\nu_2, \lambda_2+\mu_1-\nu_2)}{(\nu_2, \mu_2)}{(\nu_1, \mu_1, \nu_3)} 
\]
\end{theorem}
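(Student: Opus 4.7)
The plan is to take as input Rassart's explicit description of the Chamber Complex $\Fan$ for $SL_3$ together with his polynomial formulas, and to run an exhaustive search in SageMath for all affine-linear maps of the six-dimensional parameter space $\{(\lambda_1,\lambda_2,\mu_1,\mu_2,\nu_1,\nu_2,\nu_3) : |\lambda|+|\mu|=|\nu|\}$ that preserve the piecewise-polynomial function $\LR{\lambda}{\mu}{\nu}$. The crucial observation is that any such symmetry $\phi$ must permute the maximal chambers of $\Fan$: if $\phi$ sent a chamber across a wall, the function $\LR{\lambda}{\mu}{\nu} \circ \phi$ would fail to be given by a single polynomial on a chamber where $\LR{\lambda}{\mu}{\nu}$ itself is polynomial.

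The concrete implementation is as follows. First I would encode the finite list of maximal chambers $C_1, \ldots, C_m$ and the polynomials $p_1, \ldots, p_m$ representing $\LR{\lambda}{\mu}{\nu}$ on each, together with the support $|\Fan|$. Preservation of $\LR{\lambda}{\mu}{\nu}$ by an affine map $\phi$ then becomes the purely algebraic condition that there exists a permutation $\sigma$ of $\{1,\ldots,m\}$ with $\phi(C_i) = C_{\sigma(i)}$ and $p_{\sigma(i)} \circ \phi = p_i$. Fixing a base chamber $C_0$, I would enumerate, for each target chamber $C$, the affine bijections $C_0 \to C$ that send rays (or vertices of a transverse slice) of $C_0$ to rays of $C$; each such map is determined by finitely many combinatorial choices, and for each the polynomial identity $p_\sigma \circ \phi = p_0$ is a mechanical check.

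Collecting over all $C$ gives the raw list of symmetries; feeding its elements as generators to GAP or SageMath's group machinery then lets me verify $|G| = 144$, realize the isomorphism $G \cong \GS$, and check transitivity on the chambers by a single orbit computation. The last step is to display the $12$ classical symmetries plus the extra symmetry in the statement inside $G$, and verify that they already generate it.

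The main obstacle is keeping the search tractable: a priori the candidates form a family of dimension $42$ plus translations, but the constraint that $\phi$ sends rays of $\Fan$ to rays and chambers to chambers collapses this to a finite enumeration of manageable size. The conceptually delicate point is to certify that the SageMath-produced set is genuinely a group (closed under composition and inverses) and that the abstract isomorphism with $\GS$ is not merely a numerical coincidence of order $144$—both points are handled by exhibiting a small generating set together with the defining relations of $\GS$ and checking them symbolically against the computed maps.
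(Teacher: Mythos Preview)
Your approach is correct in outline and would succeed, but it differs from the paper's in one substantive way. You propose to enumerate, for each ordered pair of chambers, all bijections between their ray sets (each chamber is a simplicial cone on six rays, so each bijection determines a unique linear map), and then filter by the polynomial identities. The paper instead passes immediately from symmetries of the chamber complex to symmetries of the \emph{global} set $\rays$ of nine ray generators, and computes this latter group via the Bremner--Dutour--Sch\"urmann lemma: the linear automorphisms of a spanning vector set are exactly the automorphisms of an edge-colored complete graph built from the Gram-type matrix $V^t(VV^t)^{-1}V$. From the picture of this graph one reads off $S(\rays)\cong \GS$ by inspection, and moreover sees that $S(\rays)$ is generated by the images of the three known symmetries $S,T,U$ together with a single extra transposition $x=(e_1,g_2)$. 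Thus only \emph{one} candidate map $X$ needs to be verified against the chamber polynomials, rather than the several thousand your enumeration would produce; and the isomorphism type comes for free from the graph rather than from a post-hoc structure computation in GAP.

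Two small points of precision. First, you speak of ``affine-linear'' symmetries and ``plus translations'': since the chambers are cones with apex at the origin, any map permuting them fixes the origin, so you may as well restrict to linear maps from the start. Second, your enumeration tacitly uses that each chamber is simplicial (six rays in $\RR^6$), which is true here but worth stating, since otherwise a bijection of ray sets need not extend to a linear map.
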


The calculations are presented in detail in a \emph{SageMath} Notebook available online \cite{calculations} and as an ancillary file.

\section{The Littlewood--Richardson coefficients associated to $SL_3$ and their known symmetries}

\subsection{The Littlewood--Richardson coefficients associated to $SL_3$}

In this work we restrict our study to the family of Littlewood-Richardson coefficients related to $SL_3$. These are the Littlewood--Richardson coefficients $\LR{\lambda}{\mu}{\nu}$ such that $\lambda$ and $\mu$ have length at most $2$ and $\nu$ has length at most $3$. 
Since $\LR{\lambda}{\mu}{\nu}=0$ when $|\lambda|+ |\mu| \neq |\nu|$, we have for any non--zero coefficient $\LR{(\lambda_1, \lambda_2)}{(\mu_1,\mu_2)}{(\nu_1,\nu_2,\nu_3)}$ that
\begin{equation}\label{nu3}
\nu_3 = \lambda_1 + \lambda_2 + \mu_1+\mu_2 - \nu_1 - \nu_2.
\end{equation}
We thus consider the function $C$ defined on $\ZZ^6$ by 
\[
C(\lambda_1, \lambda_2, \mu_1, \mu_2, \nu_1, \nu_2) = 
\LR{(\lambda_1, \lambda_2)}{(\mu_1,\mu_2)}{(\nu_1,\nu_2,\nu_3)}  
\]
when $\lambda_1 \ge \lambda_2 \ge 0$, $\mu_1 \ge \mu_2 \ge 0$, $\nu_1 \ge \nu_2 \ge \nu_3 \ge 0$ (with $\nu_3$ defined by \eqref{nu3}), and $C(\lambda_1, \lambda_2, \mu_1, \mu_2, \nu_1, \nu_2) = 0$ otherwise.

Note that considering instead  the Littlewood-Richardson coefficients associated to $GL_3$ (the $\LR{(\lambda_1, \lambda_2, \lambda_3)}{(\mu_1,\mu_2,\mu_3)}{(\nu_1,\nu_2,\nu_3)}$) gives equivalent results (but hides symmetries). This is because of the invariance properties:
\[
\LR{(\lambda_1, \lambda_2, \lambda_3)}{(\mu_1,\mu_2,\mu_3)}{(\nu_1,\nu_2,\nu_3)} = \LR{(1+\lambda_1, 1+\lambda_2, 1+\lambda_3)}{(\mu_1,\mu_2,\mu_3)}{(1+\nu_1,1+\nu_2,1+\nu_3)} 
\]
and
\[
\LR{(\lambda_1, \lambda_2, \lambda_3)}{(\mu_1,\mu_2,\mu_3)}{(\nu_1,\nu_2,\nu_3)} = \LR{(\lambda_1, \lambda_2, \lambda_3)}{(1+\mu_1,1+\mu_2,1+\mu_3)}{(1+\nu_1,1+\nu_2,1+\nu_3)} 
\]
which imply 
\[
\LR{(\lambda_1, \lambda_2, \lambda_3)}{(\mu_1,\mu_2,\mu_3)}{(\nu_1,\nu_2,\nu_3)} = \LR{(\lambda_1, \lambda_2)}{(\mu_1,\mu_2)}{(\nu_1-\mu_3-\lambda_3,\nu_2-\mu_3-\lambda_3,\nu_3-\mu_3-\lambda_3)}. 
\]

\subsection{Known symmetries}

We now look for all linear symmetries of $C$, i.e. all invertible linear maps $F: \ZZ^6 \rightarrow \ZZ^6$ such that $C \circ F = C$.  The group $\S_3$ of symmetries \eqref{S3} is generated by
\[
\lambda, \mu, \nu \mapsto \mu, \lambda, \nu, \qquad \textrm{ and }
\quad \lambda, \mu, \nu \mapsto \nu^{\square}, \mu, \lambda^{\square},
\]
that correspond in this context to:
\begin{align*}
S: (\lambda_1, \lambda_2 \| \mu_1, \mu_2 \| \nu_1, \nu_2) &\longmapsto ( \mu_1, \mu_2 \| \lambda_1, \lambda_2 \| \nu_1, \nu_2),\\
U: (\lambda_1, \lambda_2 \| \mu_1, \mu_2 \| \nu_1, \nu_2) &\longmapsto (\nu_1 -\nu_3, \nu_1 - \nu_2 \| \mu_1, \mu_2 \| \nu_1, \nu_1 -\lambda_2).
\end{align*}
The additional  involution \eqref{3rectangles} corresponds to 
\begin{multline*}
T: (\lambda_1, \lambda_2 \| \mu_1, \mu_2 \| \nu_1, \nu_2) \\ 
\longmapsto (\lambda_1, \lambda_1 -\lambda_2 \| \mu_1, \mu_1 -\mu_2\| \lambda_1 + \mu_1 - \nu_3, \lambda_1 + \mu_1 - \nu_2)
\end{multline*}
where, again, $\nu_3$ is given by \eqref{nu3}.

Rassart's Chamber Complex and polynomial formulas for the $GL_3$--Littlewood-Richardson coefficients  restrict to a Chamber Complex and polynomial formulas for the function $C$ (set $\lambda_3=\mu_3=0$). The rays of this Chamber Complex are given in Table \ref{table:rays}, and the chambers and polynomial formulas are shown in Table \ref{table:chambers}. 
\begin{table}
\[
\begin{array}{c@{\qquad}c@{\qquad}l}
\textrm{Chamber} & \textrm{Generators} & \textrm{Formula for $C$} \\
\hline
\kappa_{1} &  b, c,  d_1, e_2,  d_2, e_1 & 1 - \lambda_2 - \mu_2 + \nu_1 \\[1mm]
\kappa_{2} &  b, c,  d_1, g_1,  d_2, g_2 & 1 + \nu_2 - \nu_3 \\[1mm]
\kappa_{3} &  b, c,  e_2, g_1,  e_1, g_2 & 1 + \lambda_1 + \mu_1 - \nu_1 \\[1mm]
\kappa_{4} &  b, f,  d_1, e_2,  d_2, e_1  & 1 + \nu_1 - \nu_2 \\[1mm]
\kappa_{5} &  b, f,  d_1, g_1,  d_2, g_2  & 1 + \lambda_2 + \mu_2 - \nu_3 \\[1mm]
\kappa_{6} &  b, f,  e_2, g_1,  e_1, g_2  & 1 - \lambda_3 - \mu_3 + \nu_3 \\[1mm]
\kappa_{7} &  b, c,  d_1, g_1,  d_2, e_1  & 1 + \lambda_3 + \mu_1 - \nu_3 \\
\kappa_{8} &  b, c,  d_1, e_2,  d_2, g_2  & 1 + \lambda_1 + \mu_3 - \nu_3 \\[1mm]
\kappa_{9\ } & b, c,  d_1, e_2,  e_1, g_2  & 1 + \lambda_1 - \lambda_2 \\
\kappa_{10} &  b, c,  e_2, g_1,  d_2, e_1  & 1 + \mu_1 - \mu_2 \\[1mm]
\kappa_{11} &  b, c,  d_1, g_1,  e_1, g_2  & 1 - \lambda_2 - \mu_3 + \nu_2 \\
\kappa_{12} &  b, c,  e_2, g_1,  d_2, g_2  & 1 - \lambda_3 - \mu_2 + \nu_2 \\[1mm]
\kappa_{13} &  b, f,  d_1, g_1,  d_2, e_1  & 1 - \lambda_1 - \mu_3 + \nu_1 \\
\kappa_{14} &  b, f,  d_1, e_2,  d_2, g_2  & 1 - \lambda_3 - \mu_1 + \nu_1 \\[1mm]
\kappa_{15} &  b, f,  d_1, g_1,  e_1, g_2  & 1 + \mu_2 - \mu_3 \\
\kappa_{16} &  b, f,  e_2, g_1,  d_2, g_2  & 1 + \lambda_2 - \lambda_3 \\[1mm]
\kappa_{17} &  b, f,  d_1, e_2,   e_1, g_2  & 1 + \lambda_1 + \mu_2 - \nu_2 \\
\kappa_{18} &  b, f,  e_2, g_1,  d_2, e_1  & 1 + \lambda_2 + \mu_1 - \nu_2 \\[1mm] 
\end{array}
\]
\caption{The Chamber Complex for $C$, from \cite[Table 1]{Rassart}. Note that there is a typo in \cite[Table 1]{Rassart}:  there one should read $\nu_1$ instead of $\nu_3$ in the quasipolynomial formulas for chambers  $\kappa_{13}$ and $\kappa_{14}$.}\label{table:chambers}
\end{table}
 
 \begin{table}
\[
\begin{array}{ccc}
b = (2, 1, 2, 1, 3, 2)
&
c = (1, 1, 1, 1, 2, 1)
&
f = (1, 0, 1, 0, 1, 1)
\\
d_1 = (1, 1, 1, 0, 1, 1) 
&
e_1 = (1, 1, 0, 0, 1, 1)
&
g_1 = (1, 0, 0, 0, 1, 0)
\\
d_2 = (1, 0, 1, 1, 1, 1)
&
e_2 = (0, 0, 1, 1, 1, 1)
&
g_2 = (0, 0, 1, 0, 1, 0)
\end{array}
\]
 \caption{The minimal generators for the rays of the Chamber Complex for $C$.}\label{table:rays}
 \end{table}

\section{Computation of the symmetries}

In this section, we prove Theorem \ref{thm:main}.

\subsection{Reduction to the symmetries of the Chamber Complex}

In order to find all symmetries of the function $C$, we look for symmetries of simpler objects: first we check that any symmetry of $C$ must be a symmetry of the chamber complex. Afterwards, we will show that such a symmetry must be also a symmetry of an even simpler object: the set of the ray generators of the Chamber Complex.

\begin{lemma}\label{lemma1}
Any linear symmetry of $C$ is necessarily a linear symmetry of the chamber complex, i.e. an invertible linear map from $\ZZ^6$ to itself that permutes the cells of the chamber complex.
\end{lemma}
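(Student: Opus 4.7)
The plan is to recover the chamber complex from the function $C$ itself, so that any invertible linear symmetry $F$ is forced to permute the cells. I proceed in two stages: first show that $F$ preserves the support $\Cone$ of the complex (the cone of Littlewood--Richardson coefficients for $SL_3$), and then show it permutes the maximal chambers. Once the maximal chambers are permuted, the lower-dimensional cells follow automatically, since they are intersections of closures of maximal chambers and $F$ is linear.

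For the first stage, because $F$ is a bijection of $\ZZ^6$ with $C \circ F = C$, it permutes the lattice set $\{x \in \ZZ^6 : C(x) > 0\}$. The support of the chamber complex is precisely the real conic hull of this set, so the extension of $F$ to an $\RR$-linear map on $\RR^6$ sends $\Cone$ onto itself.

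For the second stage, fix a maximal chamber $\kappa_i$ with polynomial formula $P_i$ from Table \ref{table:chambers} (with $\nu_3$ eliminated via \eqref{nu3}, so that each $P_i$ becomes a polynomial in six variables). For any lattice point $y$ in the interior of $F(\kappa_i)$, the preimage $F^{-1}(y)$ lies in the interior of $\kappa_i$, so
\[
C(y) = C(F^{-1}(y)) = P_i(F^{-1}(y)).
\]
Hence $C$ coincides on $F(\kappa_i) \cap \ZZ^6$ with the single polynomial $P_i \circ F^{-1}$. If the interior of $F(\kappa_i)$ met the interiors of two distinct maximal chambers $\kappa_j$ and $\kappa_k$, then on each of the corresponding nonempty open overlaps the lattice points are Zariski--dense in $\RR^6$, so $P_i \circ F^{-1}$ would equal both $P_j$ and $P_k$ as polynomials, forcing $P_j = P_k$. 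Direct inspection of Table \ref{table:chambers} (after substitution for $\nu_3$) shows that the eighteen formulas are pairwise distinct, a contradiction. Therefore $F(\kappa_i)$ is contained in the closure of a single maximal chamber $\kappa_{\sigma(i)}$; applying the same reasoning to the symmetry $F^{-1}$ upgrades this to the equality $F(\kappa_i) = \kappa_{\sigma(i)}$.

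The only mildly nontrivial step is the verification that the eighteen polynomials are pairwise distinct; this is a finite check done directly from Table \ref{table:chambers}. Everything else is a standard density/analytic continuation argument plus the observation that invertible linear maps between full-dimensional rational polyhedral cones either match or disagree on open sets, not in subtler ways.
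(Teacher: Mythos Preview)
Your argument is correct and follows essentially the same route as the paper's proof: reduce to the polynomial formulas on chambers, use Zariski density of lattice points in full-dimensional cones, and invoke the pairwise distinctness of the eighteen polynomials to force $F$ to send each chamber into a single chamber, then apply $F^{-1}$ for equality. The only cosmetic difference is your first stage, where you recover $\Cone$ as the conic hull of $\{x:C(x)>0\}$ rather than arguing (as the paper does) that $F(\kappa)$ meeting the exterior of $\Cone$ would force $P_\kappa=0$; both are equally valid and rest on the same underlying facts.
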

\begin{proof}
We will call $\Cone$ the support of the Chamber Complex, i.e. the cone that is the union of all chambers. 

Let $F$ be a symmetry of $C$. On any chamber $ \kappa$ of the chamber complex, the function $C$ coincides with a polynomial function $P_{ \kappa}$. Let $ \kappa$ be a chamber of the chamber complex. Since $C \circ F=C$ , we have that $C \circ F$ also coincides with $P_{ \kappa}$ on $ \kappa$. Therefore $C$ coincides with $P_{ \kappa} \circ F^{-1}$ on $F( \kappa)$. The cone $F( \kappa)$ can't meet the exterior of the cone $\Cone$. Indeed, if $F( \kappa)$ would meet the exterior of $\Cone$, the intersection would contain a full-dimensional cone of $\RR^6$. But two polynomial functions that coincide on the integer points of a full-dimensional cone must be equal. We would have $P_{ \kappa} \circ F^{-1} =0$, and thus $P_{ \kappa}=0$, which is false. Therefore $F( \kappa) \subset \Cone$. Again, it is impossible that $F( \kappa)$  meet two chambers $\tau_1$ and $\tau_2$ full--dimensionally, because this would imply $P_{\tau_1} = P_{\tau_2}$, and there is no such coincidence in the list of formulas for $C$. As a consequence, there exists one cell $\tau$ such that $F( \kappa) \subset \tau$. Applying the same reasoning to $F^{-1}$, we see that we must have also $F^{-1}(\tau) \subset  \kappa$. As a conclusion, $F( \kappa) = \tau$. Therefore, $F$ permutes the chambers of the chamber complex. It follows that $F$ is a symmetry of the chamber complex.
\end{proof}

\subsection{Reduction to the Symmetries of the rays}


After Lemma \ref{lemma1}, any linear symmetry of $C$ is also a linear symmetry of the chamber complex. Obviously, any linear symmetry of the chamber complex induces also a linear symmetry of the set of its ray generators $\rays=\lbrace b,c,f,d_1, d_2, e_1, e_2, g_1, g_2\rbrace$ (i.e. a permutation of $\rays$ induced by a linear invertible map from $\ZZ^6$ to itself). We will denote with $S(\rays)$ the group of linear symmetries of $\rays$.

We compute with \emph{SageMath} the group $S(\rays)$, thanks to the following lemma. Firstly, note that the group of symmetries of a set of vectors $R=\lbrace v_1, v_2, \ldots, v_n\rbrace$ embeds in the symmetric group $\mathfrak{S}_n$.

\begin{lemma}[{\cite[Proposition 3]{BremnerDutourSchuermann}}]\label{lemma2}
  Let $R=\{v_1, v_2, \ldots, v_n\}$ be a set of vectors of $\ZZ^m$, spanning $\ZZ^m$.
  Then the linear symmetries of $R$ are the automorphisms of the edge--colored complete graph $H(R)$ whose vertices are the $v_i$, with the edge $v_i \-- v_j$ colored by the entry $(i,j)$ of the matrix $Q=  V^t (V V^t)^{-1} V$, where $V$ is the matrix of order $m \times n$ whose columns are the $v_i$.
  

\end{lemma}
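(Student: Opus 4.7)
My plan is to prove the two inclusions separately, exploiting that the matrix $Q = V^t(VV^t)^{-1}V$ is intrinsic to $R$: the spanning hypothesis forces the $m \times n$ matrix $V$ to have full row rank $m$, so $VV^t$ is invertible and $Q$ is well defined; moreover $Q$ is the orthogonal projection of $\RR^n$ onto the row space of $V$. I will write $\Sigma$ for the $n \times n$ permutation matrix associated to $\sigma$, normalized so that the $j$-th column of $V\Sigma$ is $v_{\sigma(j)}$.

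For the easy direction, I start from an invertible integer matrix $P$ satisfying $Pv_i = v_{\sigma(i)}$ for all $i$, i.e.\ $PV = V\Sigma$. Multiplying by the transposes of each side yields $PVV^tP^t = V\Sigma\Sigma^tV^t = VV^t$, which rearranges to $P^t(VV^t)^{-1}P = (VV^t)^{-1}$. Substituting into the identity $\Sigma^tQ\Sigma = (V\Sigma)^t(VV^t)^{-1}(V\Sigma) = V^tP^t(VV^t)^{-1}PV$ then gives $\Sigma^tQ\Sigma = Q$, which is exactly $Q_{\sigma(i),\sigma(j)} = Q_{ij}$, so $\sigma$ is an automorphism of the colored graph $H(R)$.

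For the harder direction, I suppose $\sigma$ preserves the coloring, so that $\Sigma^tQ\Sigma = Q$; since $\Sigma$ is orthogonal this reads $Q\Sigma = \Sigma Q$. My candidate map is $P := V\Sigma V^t(VV^t)^{-1}$, which is the natural guess obtained by right-multiplying the desired identity $PV = V\Sigma$ by the right inverse $V^t(VV^t)^{-1}$ of $V$. Using the identity $VQ = V$ (immediate from the definition of $Q$) together with the commutation $Q\Sigma = \Sigma Q$, I compute
\[
PV = V\Sigma Q = V(Q\Sigma) = (VQ)\Sigma = V\Sigma,
\]
so $Pv_i = v_{\sigma(i)}$ for every $i$, as required.

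What remains is to show $P \in GL_m(\ZZ)$, and I would argue this directly from the spanning hypothesis rather than from the formula. Any $w \in \ZZ^m$ is an integer combination $\sum_i c_i v_i$, hence $Pw = \sum_i c_i v_{\sigma(i)} \in \ZZ^m$, so $P$ is integral; applying the same construction to $\sigma^{-1}$ yields a matrix $P'$ with $P'v_i = v_{\sigma^{-1}(i)}$, and then $P'P$ fixes every $v_i$ and hence is the identity on $\RR^m$, so $P^{-1} = P'$ is also integral. I do not expect a truly hard step here: once the coloring hypothesis is translated into the commutation $Q\Sigma = \Sigma Q$, the explicit formula for $P$ together with the identity $VQ = V$ carries the argument. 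The only delicate point is invoking the spanning hypothesis twice --- once to make $Q$ well defined, and once to upgrade $P$ from a rational matrix to an element of $GL_m(\ZZ)$.
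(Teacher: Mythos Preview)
The paper does not supply its own proof of this lemma; it simply quotes the result from \cite{BremnerDutourSchuermann} and uses it as a black box. Your argument is correct and self-contained. The two key observations you isolate---that the graph-automorphism condition is exactly the matrix identity $\Sigma^t Q\Sigma=Q$ (equivalently $Q\Sigma=\Sigma Q$), and that $VQ=V$---are precisely what make the explicit candidate $P=V\Sigma V^t(VV^t)^{-1}$ work, and your use of the spanning hypothesis to promote $P$ from a rational matrix to an element of $GL_m(\ZZ)$ is clean.

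One small point worth making explicit: when you pass from ``$\sigma$ preserves the coloring'' to ``$\Sigma^t Q\Sigma=Q$'' you are using not only the off-diagonal entries $Q_{ij}$ (the edge colors) but also the diagonal entries $Q_{ii}$. In other words, you are implicitly reading $H(R)$ as a complete graph \emph{with loops}, or equivalently as a vertex-and-edge-colored graph with vertex color $Q_{ii}$. This is indeed the intended interpretation (and the one in the cited source), but since the paper's statement only says ``edge-colored'' it is worth flagging that the diagonal information is needed for the harder direction of your proof.
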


In the case under consideration, the edge--colored graph $H(\rays)$ is shown in Figure \ref{graph}, and its automorphism group is readily obtained: $S(\rays)$ is the direct product of the group $\S_2$ of permutations of $\{c, f\}$ and of the wreath product $\S_3 \wr \S_2$ of the permutations of $\{ d_1, e_2, g_1, d_2, e_1, g_2\}$ that stabilize or swap the subsets $\{d_1, e_2, g_1\}$ and $\{d_2, e_1, g_2\}$. In particular, $S(\rays)$ is generated by $v = (c, f)$ , $x = (e_1, g_2)$ and  $y=(e_1, d_2)$ that permute $\{e_1, g_2, d_2\}$, and $s=(d_1, d_2)(e_1, e_2)(g_1,g_2)$ that swaps $\{d_1, e_2, g_1\}$ and $\{d_1, e_1, g_2\}$. 
\begin{figure}
  \includegraphics[width=0.5\textwidth]{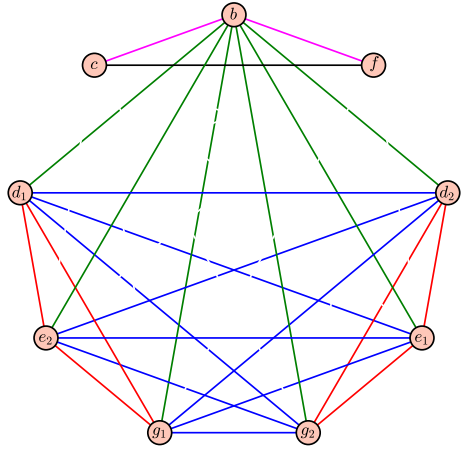}
  \caption{The edge--colored graph $H(\rays)$. Its automorphism group is the group of linear symmetries of the ray generators of the chamber complex.}\label{graph}
\end{figure}

Note that $s$ is precisely the automorphism of $\rays$ induced by the known symmetry $S$ of the Littlewood-Richardson coefficients. Let us consider 
the automorphisms $t,u$ of the rays induced by the other two known symmetries $T$, $U$. It is easily calculated that 
\[
t = (d_1, d_2) (e_1, g_1) (e_2, g_2) (c,f), \quad 
u = (d_1, g_2) (d_2, e_2) (e_1, g_1).
\]
Again with \emph{SageMath}, we check that  $s$, $t$, $u$ and $x$ already generate $S(\rays)$. More precisely, we get that $v = txsx$ and $y=usxsu$, which is easily checked by hand.

\subsection{The symmetries of the rays are also symmetries of the Littlewood--Richardson coefficients}
 
Note that any automorphism of the rays lifts uniquely to a linear automorphism of $\RR^6$. Let $X$ be the lifting of $x$. Let $G$ be the group of all liftings of the elements of $S(\rays)$. After Lemmas \ref{lemma1} and \ref{lemma2}, the group $G$ contains all symmetries of $C$. 
 We will check by explicit calculation that $G$ is actually equal to the group of symmetries of $C$. Since $G$ is generated by $S$, $T$, $U$ and $X$, and that $S$, $T$ and $U$ are known to be symmetries of $C$, it is enough to check that $X$ is a symmetry of $C$. 
 
 To check that $X$ is a symmetry of $C$ we proceed as follows:   for each chamber $ \kappa$, with generators $w_1, w_2, w_3, w_4, w_5, w_6$, we calculate $x(w_1)$, $x(w_2), \ldots$ (This is immediate since $x$ just swaps $e_1$ and $g_2$ and leaves all other ray generators fixed). We check that they are the generators of a chamber $g( \kappa)$ of the Chamber Complex. Then we check by inspection that $g$ is indeed a permutation of the chambers. Last,  we check that for each $ \kappa$, the polynomials $P_{g( \kappa)}$  and $P_{ \kappa} \circ X^{-1}$ are equal.  The latter calculation is performed within \emph{SageMath}, and gives the expected result: $X$ is indeed a symmetry of $C$. Finally, one calculates that $X$ is given by 
 \[
 (\lambda_1, \lambda_2\| \mu_1, \mu_2\| \nu_1, \nu_2) \longmapsto 
 (\lambda_1 + \mu_1 -\nu_2, \lambda_2 + \mu_1-\nu_2\| \nu_2, \mu_2\| \nu_1, \mu_1).
 \] 

\subsection{Transitivity of the action on the chambers.}

By direct inspection of Table \ref{table:chambers}, one can check that the rays of each chamber are: $b$, one of $\{c,f\}$, two of $\{d_1, e_2, g_1\}$ and two of $\{d_2, e_1, g_2\}$. This proves that $G$ permutes transitively the chambers, since $G$ is exactly the stabilizer of $(\{c,f\},\{\{d_1, e_2, g_1\},\{d_2, e_1, g_2\}\})$.

\section{Final remarks}

\subsection{Littlewood--Richardson coefficients associated to $GL_3$}

If we consider the Littlewood--Richardson coefficients associated to $GL_3$, instead of $SL_3$, we get one more generator for the group of symmetries, yielding in total $144 \times 2 = 288$ symmetries. The additional generator sends $(\lambda_1, \lambda_2, \lambda_3 \| \mu_1, \mu_2, \mu_3\|  \nu_1, \nu_2, \nu_3)$ to
\[
(\lambda_1 -m, \lambda_2-m, \lambda_3-m \| \mu_1 + m, \mu_2+m, \mu_3+m\| \nu_1, \nu_2, \nu_3),
\]
where $m=\lambda_3-\mu_3$.
In Rassart's description \cite{Rassart} of the chamber complex for the Littlewood-Richardson coefficients associated to $GL_3$, this generator swaps the additional rays $a_1$ and $a_2$ while fixing all other rays.

\subsection{The case of $SL_N$ for $N \ge 4$}

One finds that the linear symmetries of the Littlewood-Richardson coefficients associated to $SL_N$, for $N$ in $\{4,5,6,7\}$, are only the 12 known symmetries. Indeed, the group of symmetries of these Littlewood--Richardson coefficients embeds in the group of linear symmetries of the ray generators of their support. But one computes that this group has only 12 elements\cite{inprogress}.

\section{Perspectives}

There are two contributions in this work: the results (144 linear symmetries for the  Littlewood-Richardson coefficients associated to the representations of $SL_3$, a much bigger number than expected); and the method (embedding the group of the linear symmetries in the group of symmetries of a chamber complex, and the group of symmetries of its rays). This method may be applied to the study of the symmetries of other families of Littlewood-Richardson coefficients, and even more, to families of other representation--theoretic structural coefficients 
(such as Kostka coefficients, Kronecker coefficients, plethysm coefficients) as long as we have an explicit description of  their piecewise quasipolynomial formulas.  Explicit descriptions of piecewise quasipolynomial for such structural coefficients appear in the literature. For example, for the plethysm coefficients  in the Schur expansion of $s_{\mu}[s_k]$ with $\mu$ any  fixed partition of $3$, $4$ or $5$ \cite{KahleMichalek}, for the Kronecker coefficients $g_{\lambda, \mu, \nu}$ with $\lambda$ and $\mu$ of length $\le 2$ \cite{BriandOrellanaRosas:reducedKronecker} or with $\lambda$, $\mu$, $\nu$ of length $\le 3$ \cite{BaldoniVergneWalter}.


\end{document}